\documentclass[12pt,oneside]{amsart}
\usepackage[utf8]{inputenc}
\usepackage{geometry}
\geometry{verbose}
\usepackage{mathtools}
\usepackage{amstext}
\usepackage{amsthm}
\usepackage{esint}

\makeatletter
\numberwithin{equation}{section}
\numberwithin{figure}{section}
\theoremstyle{plain}
\newtheorem{thm}{\protect\theoremname}
\theoremstyle{plain}
\newtheorem{lem}[thm]{\protect\lemmaname}


\global\long\def\sign{\operatorname{sign}}

\makeatother

\providecommand{\lemmaname}{Lemma}
\providecommand{\theoremname}{Theorem}

\begin{document}
\title{linear combinations of polynomials with three-term recurrence}
\begin{abstract}
We study the zero distribution of the sum of the first $n$ polynomials
satisfying a three-term recurrence whose coefficients are linear polynomials.
We also extend this sum to a linear combination, whose coefficients
are powers of $az+b$ for $a,b\in\mathbb{R}$, of Chebyshev polynomials.
In particular, we find necessary and sufficient conditions on $a$,
$b$ such that this linear combination is hyperbolic. 
\end{abstract}

\author{Khang Tran}
\address{California State University, Fresno}
\email{khangt@mail.fresnostate.edu}
\author{Maverick Zhang}
\address{University of California, Berkeley}
\email{maverickzhang@berkeley.edu}
\subjclass[2000]{30C15, 26C10}
\maketitle

\section{Introduction}

The sequence of Chebyshev polynomials of the first kind $\left\{ T_{n}(z)\right\} _{n=0}^{\infty}$
defined by the recurrence 
\[
T_{n+1}(z)=2zT_{n}(z)+T_{n-1}(z)
\]
with $T_{0}(z)=1$ and $T_{1}(z)=z$ forms a sequence of orthogonal
polynomials whose zeros are real (i.e., hyperbolic polynomials). The
location of zeros of polynomials satisfying a more general recurrence
\begin{equation}
R_{n+1}(z)=A(z)R_{n}(z)+B(z)R_{n-1}(z)\label{eq:generalrecurrence}
\end{equation}
where $A(z),B(z)\in\mathbb{C}[z]$ was given in \cite{tran}. In \cite{stankov},
the author studied the set of zeros of a linear combination of Chebyshev
polynomials $\sum_{k=0}^{m}a_{k}T_{n-k}(z)$, $m\le n$, $a_{k}\in\mathbb{R},$
and provided a connection between this sequence and the theory of
Pisot and Salem numbers in number theory. In the special case when
$m=n$ and $a_{k}=1$ $\forall k$, the sum of the first $n$ Chebyshev
polynomials connects to Direchlet kernel in Fourier analysis. In Section
2 of this paper, we to study the zeros of this sum (c.f. Theorem \ref{thm:firsttheorem})
when the sequence of Chebyshev polynomials are replaced by a more
general sequence $\left\{ R_{n}(z)\right\} $ given in \eqref{eq:generalrecurrence}
where $A(z)$ and $B(z)$ are any linear polynomials with real coefficients.

The sequence of Chebyshev polynomials of the second kind $\left\{ U_{n}(z)\right\} $
satisfies the same recurrence as that of the first kind with the initial
condition $U_{0}(z)=1$ and $U_{1}(z)=2z$. This initial condition
can be written in the form $U_{0}(z)=1$ and $U_{-n}(z)=0$, $\forall n\in\mathbb{N}$.
In Section 3 of this paper, we study the zeros of a linear combination
of Chebyshev polynomials of the second kind whose coefficients are
power of $az+b$. In particular, we consider 
\begin{equation}
Q_{n}(z)=\sum_{k=0}^{n}(az+b)^{k}U_{n-k}(z),\qquad a,b\in\mathbb{R}.\label{eq:linearcombCheb}
\end{equation}
We find necessary and sufficient conditions on $a$ and $b$ under
which the zeros of resulting polynomials are real (c.f. Theorem \ref{thm:thirdtheorem}).

\section{Sum of polynomials with three-term recurrence}

For $a_{1},b_{1},a_{2},b_{2}\in\mathbb{R}$, $a_{2}\ne0$, we let
$R_{n}(z)$ be the sequence of polynomials satisfying the recurrence
\[
R_{n+1}(z)=(a_{1}z+b_{1})R_{n}(z)+(a_{2}z+b_{2})R_{n-1}(z)
\]
with $R_{0}(z)=1$ and $R_{-n}(z)=0$, $\forall n\in\mathbb{N}$.
Equivalently the sequence $\left\{ R_{n}(z)\right\} _{n=0}^{\infty}$
is generated by 
\[
\sum_{n=0}^{\infty}R_{n}(z)t^{n}=\frac{1}{1-(a_{1}z+b_{1})t-(a_{2}z+b_{2})t^{2}}.
\]
In this section, we study neccessary and sufficient conditions on
$a_{1}$, $b_{1}$, $a_{2}$, and $b_{2}$ under which all the zeros
of the polynomial 
\[
\sum_{k=0}^{n}R_{n-k}(z)
\]
are real. Those polynomials form a sequence whose generating function
is 
\begin{align*}
\sum_{n=0}^{\infty}\sum_{k=0}^{n}R_{k}(z)t^{n} & =\sum_{k=0}^{\infty}t^{k}\sum_{n=k}^{\infty}R_{n-k}(z)t^{n-k}\\
 & =\frac{1}{(1-t)\left(1-(a_{1}z+b_{1})t-(a_{2}z+b_{2})t^{2}\right)}.
\end{align*}
With the substitutions $t$ by $-t$, $a_{2}$ by $-a_{2}$, and $b_{2}$
by $-b_{2}$, and then substitute $a_{2}z+b_{2}$ by $z$, we reduce
the generating function to the form 
\[
\frac{1}{(t+1)((az+b)t^{2}+zt+1)}.
\]
Note that all the substitutions above preserve the reality of the
zeros of the generated sequence of polynomials. We state the main
theorem of this section. 
\begin{thm}
\label{thm:firsttheorem} Let $a,b\in\mathbb{R}$. The zeros of all
the polynomials $P_{n}(z)$ generated by 
\begin{equation}
\sum_{n=0}^{\infty}P_{n}(z)t^{n}=\frac{1}{(t+1)((az+b)t^{2}+zt+1)}\label{eq:genfuncPn}
\end{equation}
are real if and only if $b\geq1+2\left|a\right|.$ Under this condition
the zeros of $P_{n}(z)$ lie on 
\begin{equation}
\left(2a-2\sqrt{a^{2}+b},2a+2\sqrt{a^{2}+b}\right)\label{eq:intPn}
\end{equation}
and are dense there as $n\rightarrow\infty$. 
\end{thm}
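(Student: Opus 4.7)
My plan is to substitute a trigonometric parameterization that turns $P_{n}(z)=0$ on the interval \eqref{eq:intPn} into a transparent equation in $\theta$. For $z$ in that open interval, the discriminant $z^{2}-4(az+b)$ of the quadratic factor in \eqref{eq:genfuncPn} is negative, and under $b\geq 1+2|a|$ one verifies that $az+b>0$ there, so the two roots of $(az+b)t^{2}+zt+1$ form a conjugate pair $re^{\pm i\theta}$ with $r=1/\sqrt{az+b}$ and $\theta\in(0,\pi)$ determined by $\cos\theta=-z/(2\sqrt{az+b})$. A short computation shows that $\theta\mapsto z$ is a monotone bijection onto the interval in \eqref{eq:intPn}.

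Next I would decompose \eqref{eq:genfuncPn} by partial fractions with respect to the factor $t+1$ and exploit the Binet-type closed form $S_{m}(z)=\sin((m+1)\theta)/(r^{m}\sin\theta)$ for the Chebyshev-like sequence generated by $1/(1+zt+(az+b)t^{2})$. After simplification, the coefficient of $t^{n}$ should reduce to
\[
P_{n}(z)=\frac{g_{n}(\theta)}{D\,r^{n+2}\sin\theta},\quad g_{n}(\theta):=(-1)^{n}r^{n+2}\sin\theta+r\sin((n+2)\theta)+\sin((n+1)\theta),
\]
where $D=(a-1)z+b+1$. Thus the zeros of $P_{n}$ in the interval correspond bijectively to the zeros of $g_{n}$ in $(0,\pi)$.

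For the ``if'' direction I would first verify the elementary equivalence $b\geq 1+2|a|\iff r(\theta)\leq 1$ on $[0,\pi]$. Evaluating $g_{n}$ at $\theta_{k}:=k\pi/(n+1)$, where $\sin((n+1)\theta_{k})=0$ and $\sin((n+2)\theta_{k})=(-1)^{k}\sin\theta_{k}$, gives
\[
g_{n}(\theta_{k})=r(\theta_{k})\sin\theta_{k}\bigl[(-1)^{n}r(\theta_{k})^{n+1}+(-1)^{k}\bigr],
\]
whose sign is $(-1)^{k}$ because $r^{n+1}<1$; this produces $n-1$ sign changes of $g_{n}$ between successive $\theta_{k}$'s. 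A first-order expansion near $\theta=0^{+}$ gives $g_{n}(\theta)\sim\theta\,[(-1)^{n}r(0)^{n+2}+(n+2)r(0)+(n+1)]$, whose bracket is positive in either parity of $n$; hence $g_{n}>0$ just to the right of $0$ whereas $g_{n}(\theta_{1})<0$, yielding one additional sign change in $(0,\theta_{1})$. In total $g_{n}$ has $n$ zeros in $(0,\pi)$, and since $\deg P_{n}=n$ these account for all of its zeros.

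For the converse, if $b<1+2|a|$ then $r>1$ on an open subinterval $J\subset(0,\pi)$; there $(-1)^{n}r^{n+2}\sin\theta$ dominates the other two terms of $g_{n}$ exponentially, so $g_{n}$ has no zeros in compact subsets of $J$ for $n$ large. Analyzing the partial-fraction expansion at real $z$ outside \eqref{eq:intPn}, where one of the now-real quadratic roots produces the dominant exponential term, similarly forces only $O(1)$ real zeros off the interval, independent of $n$. Since $\deg P_{n}=n$, the remaining zeros must be non-real for large $n$. The density claim then follows because consecutive $\theta_{k}$'s are separated by $\pi/(n+1)\to 0$ and $\theta\mapsto z$ is continuous. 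I expect the trickiest step to be the boundary expansion near $\theta=0$ (and symmetrically near $\theta=\pi$) needed to upgrade the $n-1$ interior sign changes to the full count $n$; a close second is the asymptotic bookkeeping in the converse direction that forbids too many real zeros from escaping the interval.
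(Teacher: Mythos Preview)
Your treatment of the sufficient direction is essentially the paper's proof. The paper also parameterizes $z$ by $\theta\in(0,\pi)$ via the conjugate roots $\tau(\theta)e^{\pm i\theta}$ of the quadratic factor (your $r$ is the paper's $\tau$), derives the same auxiliary function \eqref{eq:functheta} (your $g_{n}$ is \eqref{eq:functheta} multiplied by $-\tau\sin\theta$), checks $\tau<1$ under $b\ge1+2|a|$, reads off the sign $(-1)^{k}$ at $\theta_{k}=k\pi/(n+1)$, and picks up the extra sign change near $\theta=0^{+}$. The only cosmetic difference is that the paper extracts the formula by a residue computation rather than a partial-fraction/Binet step.

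Your necessary direction, however, has genuine gaps. First, the parameterization you rely on exists only when the interval \eqref{eq:intPn} is nonempty, i.e.\ when $a^{2}+b>0$; the regime $b\le -a^{2}$ (in particular much of the case $b\le -1$) is simply not covered by your argument. Second, even when $a^{2}+b>0$, your logic does not give an \emph{upper} bound on the number of real zeros. You show that $g_{n}$ has no zeros on a compact $K\subset J$ for large $n$, and you assert $O(1)$ real zeros outside the interval; but nothing prevents all $n$ zeros from being real and sitting in $z\bigl((0,\pi)\setminus K\bigr)$ together with the $O(1)$ off-interval zeros. What is needed is either an upper bound on the zeros of $g_{n}$ on $(0,\pi)\setminus J$ (which is not at all obvious for this three-term trigonometric/exponential sum), or a different mechanism altogether. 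Third, the off-interval claim that ``one real quadratic root dominates, forcing only $O(1)$ real zeros'' is asserted without proof; near the points where two denominator roots have equal modulus the dominance breaks down, and turning this heuristic into a uniform bound in $n$ is nontrivial.

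The paper avoids all of this by a different route: instead of bounding real zeros, it exhibits an explicit $z^{*}\in\mathbb{C}\setminus\mathbb{R}$ at which the two smallest-modulus zeros of $(t+1)\bigl((az^{*}+b)t^{2}+z^{*}t+1\bigr)$ have equal modulus, and then invokes the Beraha--Kahane--Weiss/Sokal limit theorem \cite[Theorem~1.5]{sokal} to conclude that every neighborhood of $z^{*}$ contains a zero of $P_{n}$ for all large $n$. Two separate constructions handle $b\le-1$ and $-1<b<1+2|a|$. This directly produces non-real zeros without any global counting, and in particular covers the cases (such as $a^{2}+b\le0$) where your trigonometric parameterization is unavailable.
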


\subsection{The sufficient condition}

We assume $b\ge1+2|a|$. To prove the zeros of $P_{n}(z)$ lie on
\eqref{eq:intPn}, we count the number of real zeros of $P_{n}(z)$
on this interval and show that this number is at least the degree
of this polynomial which is given by the lemma below. 
\begin{lem}
\label{lem:degPn}For each $n\in\mathbb{N}$, the degree of $P_{n}(z)$
is at most $n$. 
\end{lem}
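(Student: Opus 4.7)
The plan is to extract a short linear recurrence for $P_n(z)$ directly from the generating function and then run a simple induction on the degree. Multiplying both sides of \eqref{eq:genfuncPn} by the denominator $(t+1)((az+b)t^2 + zt + 1)$ and expanding yields
\[
(t+1)((az+b)t^2 + zt + 1) = (az+b)t^3 + ((a+1)z + b)t^2 + (z+1)t + 1,
\]
so that
\[
1 = \sum_{n=0}^\infty P_n(z)t^n \cdot \bigl((az+b)t^3 + ((a+1)z+b)t^2 + (z+1)t + 1\bigr).
\]
Reading off the coefficient of $t^n$ for $n \ge 3$ produces the four-term recurrence
\[
P_n(z) = -(z+1)P_{n-1}(z) - ((a+1)z+b)P_{n-2}(z) - (az+b)P_{n-3}(z),
\]
with the initial values $P_0, P_1, P_2$ determined by matching coefficients of $t^0$, $t^1$, and $t^2$.

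Next I would verify the base cases directly, finding $P_0 = 1$, $P_1 = -(z+1)$, and $P_2 = (z+1)^2 - ((a+1)z+b)$, so that $\deg P_k \le k$ for $k = 0, 1, 2$. Then a straightforward induction does the rest: assuming $\deg P_{n-i} \le n-i$ for $i \in \{1,2,3\}$, each term on the right of the recurrence is a polynomial in $z$ of degree at most $1 + (n-1) = n$, and the triangle inequality for degrees gives $\deg P_n \le n$.

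There is essentially no serious obstacle; the only point requiring attention is that the leading coefficients in the recurrence may or may not conspire to give a polynomial of strictly smaller degree, but since we only claim the upper bound ``$\le n$'' rather than an equality, no cancellation analysis is required. The argument is self-contained and does not use the hypothesis $b \ge 1 + 2|a|$, as the lemma is stated uniformly in $a,b \in \mathbb{R}$.
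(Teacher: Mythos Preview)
Your proof is correct and essentially identical to the paper's: both expand the denominator, read off the same four-term recurrence $P_n=-(z+1)P_{n-1}-((a+1)z+b)P_{n-2}-(az+b)P_{n-3}$, and conclude by induction. The only cosmetic difference is that the paper absorbs the base cases by setting $P_{-n}(z)=0$ for $n\in\mathbb{N}$, whereas you compute $P_0,P_1,P_2$ explicitly.
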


\begin{proof}
We collect the coefficients in $t$ of the denominator of the right
side of \eqref{eq:genfuncPn} and obtain the recurrence 
\begin{equation}
P_{n}(z)=-(z+1)P_{n-1}(z)-((a+1)z+b)P_{n-2}(z)-(az+b)P_{n-3}(z)\label{eq:recurrencePn}
\end{equation}
where $P_{0}(z)=1$ and $P_{-n}(z)=0$, $\forall n\in\mathbb{N}$.
The lemma follows from induction. 
\end{proof}
To count the number of real zeros of $P(z)$, we construct two auxiliary
real-valued functions $z(\theta)$ and $\tau(\theta)$ on $\theta\in(0,\pi)$.
The first function is defined as 
\begin{equation}
z(\theta)=2a\cos^{2}\theta-2\cos\theta\sqrt{a^{2}\cos^{2}\theta+b}.\label{eq:zthetadef}
\end{equation}
By the quadratic formula, $z(\theta)$ satisfies 
\begin{equation}
z(\theta)^{2}-4az(\theta)\cos^{2}\theta-4b\cos^{2}\theta=0.\label{eq:zthetaeq}
\end{equation}

We will show later that there are $n$ values of $\theta\in(0,\pi)$,
each of which yields a zero of $P_{n}(z)$ on \eqref{eq:intPn} via
$z(\theta)$. The lemma below ensures a bijective correspondence between
$\theta$ and $z(\theta)$. 
\begin{lem}
\label{lem:zmonotone}The function $z(\theta)$ is increasing on $(0,\pi)$
and it maps this interval onto the interval 
\[
\left(2a-2\sqrt{a^{2}+b},2a+2\sqrt{a^{2}+b}\right).
\]
\end{lem}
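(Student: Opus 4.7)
The plan is to pass to the substitution $c=\cos\theta$, which is a strictly decreasing smooth bijection from $(0,\pi)$ onto $(-1,1)$. Once written as a function of $c$, namely
\[
z=2ac^{2}-2c\sqrt{a^{2}c^{2}+b},
\]
proving that $z(\theta)$ is strictly \emph{increasing} in $\theta$ is equivalent to showing $dz/dc<0$ on $(-1,1)$. Differentiating,
\[
\frac{dz}{dc}=4ac-2\sqrt{a^{2}c^{2}+b}-\frac{2a^{2}c^{2}}{\sqrt{a^{2}c^{2}+b}}=4ac-\frac{4a^{2}c^{2}+2b}{\sqrt{a^{2}c^{2}+b}},
\]
so the claim reduces to the inequality $4ac\sqrt{a^{2}c^{2}+b}<4a^{2}c^{2}+2b$. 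I would split into two cases. If $ac\le 0$, the left side is non-positive while the right side is strictly positive because $b\ge1+2|a|>0$, so the inequality is immediate. If $ac>0$, both sides are positive, and squaring converts the inequality into $16a^{2}c^{2}(a^{2}c^{2}+b)<(4a^{2}c^{2}+2b)^{2}$, which after expansion simplifies to $0<4b^{2}$, again true. Hence $dz/dc<0$ strictly on $(-1,1)$, so $z(\theta)$ is strictly increasing on $(0,\pi)$.

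For the image, evaluate the continuous extension at the endpoints: $z(0)=2a-2\sqrt{a^{2}+b}$ and $z(\pi)=2a+2\sqrt{a^{2}+b}$. By continuity and strict monotonicity, $z$ maps $(0,\pi)$ bijectively onto the open interval $\bigl(2a-2\sqrt{a^{2}+b},\,2a+2\sqrt{a^{2}+b}\bigr)$.

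The only mild obstacle is justifying the squaring step; this is handled by noting $b>0$ (guaranteed by the standing hypothesis $b\ge1+2|a|$) so that the right-hand side is strictly positive, and by handling the sign of $ac$ first. Everything else is a routine chain-rule computation and an appeal to the intermediate value theorem.
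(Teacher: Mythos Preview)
Your proof is correct and follows essentially the same approach as the paper: both reduce monotonicity to the inequality $2a^{2}c^{2}+b>2|ac|\sqrt{a^{2}c^{2}+b}$ (with $c=\cos\theta$), observe the left side is positive, and square to obtain $b^{2}>0$. The only cosmetic difference is that you pass to the variable $c=\cos\theta$ and show $dz/dc<0$, whereas the paper differentiates in $\theta$ directly; the resulting inequality and its verification are identical.
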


\begin{proof}
To show $z(\theta)$ is increasing, we compute its derivative 
\[
\frac{dz}{d\theta}=-4a\cos\theta\sin\theta+\frac{4a^{2}\cos^{2}\theta\sin\theta+2b\sin\theta}{\sqrt{a^{2}\cos^{2}\theta+b}}
\]
and see that it suffices to show 
\[
2a^{2}\cos^{2}\theta+b>2\left|a\cos\theta\right|\sqrt{a^{2}\cos^{2}\theta+b}.
\]
The left side is positive and the squares of both sides reduce the
inequality to $b^{2}>0$, which shows that $z(\theta)$ is increasing.
We complete the lemma by computing the limits 
\begin{align*}
\lim_{\theta\rightarrow0}z(\theta) & =2a-2\sqrt{a^{2}+b},\\
\lim_{\theta\rightarrow\pi}z(\theta) & =2a+2\sqrt{a^{2}+b}.
\end{align*}
\end{proof}
To define the second function $\tau(\theta)$, we need the following
lemma. 
\begin{lem}
\label{lem:tauwelldef}For any $\theta\in(0,\pi)$, we have 
\[
az(\theta)+b>0.
\]
\end{lem}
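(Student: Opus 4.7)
The plan is to exploit the defining identity \eqref{eq:zthetaeq}. Rewriting it as
\begin{equation*}
az(\theta) + b \;=\; \frac{z(\theta)^{2}}{4\cos^{2}\theta}
\end{equation*}
reduces the claim, for $\theta\in(0,\pi)\setminus\{\pi/2\}$, to showing that $z(\theta)\neq 0$: the right-hand side is then automatically nonnegative with strict positivity governed solely by whether $z(\theta)$ vanishes. So the entire problem collapses to checking the nonvanishing of $z(\theta)$ away from $\theta=\pi/2$, plus a direct check at $\theta=\pi/2$.

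To handle the nonvanishing, I would factor the definition \eqref{eq:zthetadef} as
\begin{equation*}
z(\theta) \;=\; 2\cos\theta\,\bigl(a\cos\theta - \sqrt{a^{2}\cos^{2}\theta + b}\bigr).
\end{equation*}
Because the standing hypothesis $b\ge 1+2|a|$ gives in particular $b>0$, the inequality $\sqrt{a^{2}\cos^{2}\theta + b} > |a\cos\theta| \ge a\cos\theta$ is strict, so the parenthesized factor is strictly negative. Hence $z(\theta)=0$ can occur only when $\cos\theta=0$, i.e., $\theta=\pi/2$. Combined with the first display, this yields $az(\theta)+b>0$ on $(0,\pi)\setminus\{\pi/2\}$.

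The remaining value $\theta=\pi/2$ is immediate from \eqref{eq:zthetadef}: $z(\pi/2)=0$, so $az(\pi/2)+b = b \ge 1+2|a| > 0$. There is no genuine obstacle in this proof; the only substantive step is the square-root comparison used to rule out the zero of $z(\theta)$, and that uses the hypothesis in the mildest possible way (only $b>0$ is needed). An alternative route is to substitute the explicit formula for $z(\theta)$ directly, set $c=a\cos\theta$, and verify $2c^{2}+b > 2c\sqrt{c^{2}+b}$ by squaring both sides and arriving at $b^{2}>0$; but the route via \eqref{eq:zthetaeq} is cleaner and more in keeping with the role $z(\theta)$ plays later in the argument.
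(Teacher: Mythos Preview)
Your proof is correct. The paper takes a different route: it invokes Lemma~\ref{lem:zmonotone} to confine $z(\theta)$ to the interval $\bigl(2a-2\sqrt{a^{2}+b},\,2a+2\sqrt{a^{2}+b}\bigr)$, reducing the claim to the single endpoint inequality $b+2a^{2}>2|a|\sqrt{a^{2}+b}$, which follows by squaring (yielding $b^{2}>0$). Your approach bypasses the monotonicity lemma entirely by reading $az(\theta)+b$ directly off the quadratic identity \eqref{eq:zthetaeq} as $z(\theta)^{2}/(4\cos^{2}\theta)$, so that only the nonvanishing of $z(\theta)$ needs checking. This is more self-contained and uses only $b>0$; the paper's route is shorter once Lemma~\ref{lem:zmonotone} is already available and avoids singling out $\theta=\pi/2$. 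The alternative you sketch at the end---reducing to $2c^{2}+b>2c\sqrt{c^{2}+b}$ with $c=a\cos\theta$ and squaring to reach $b^{2}>0$---is essentially the paper's computation carried out pointwise rather than at the extreme values of $z(\theta)$.
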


\begin{proof}
From Lemma \ref{lem:zmonotone}, it suffices to show that 
\[
b+2a^{2}>2|a|\sqrt{a^{2}+b}.
\]
Since we know the left side is positive by $b\ge1+2|a|$, we obtain
the inequality above by squaring both sides. 
\end{proof}
From Lemma \ref{lem:tauwelldef}, we define the functions 
\begin{align*}
\tau(\theta) & =\frac{1}{\sqrt{az(\theta)+b}},\\
t_{1}(\theta) & =\tau(\theta)e^{-i\theta},\\
t_{2}(\theta) & =\tau(\theta)e^{i\theta},
\end{align*}
on $\theta\in(0,\pi)$. 
\begin{lem}
\label{lem:zerosdenom}For any $\theta\in(0,\pi)$, the two zeros
of 
\begin{equation}
(az(\theta)+b)t^{2}+z(\theta)t+1\label{eq:quadfactor}
\end{equation}
are $t_{1}(\theta)$ and $t_{2}(\theta)$. 
\end{lem}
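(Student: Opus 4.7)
The plan is to verify the claim by Vieta's formulas: I will show that $t_1(\theta)$ and $t_2(\theta)$ have the correct product and sum of roots to be the zeros of the quadratic \eqref{eq:quadfactor}. By Lemma \ref{lem:tauwelldef}, the leading coefficient $az(\theta)+b$ is strictly positive, so $\tau(\theta)$ is a well-defined positive real, and the quadratic has precisely two (possibly complex) roots whose sum is $-z(\theta)/(az(\theta)+b)$ and whose product is $1/(az(\theta)+b)=\tau(\theta)^{2}$.

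First I would check the product: by definition,
\[
t_{1}(\theta)t_{2}(\theta)=\tau(\theta)^{2}e^{-i\theta}e^{i\theta}=\tau(\theta)^{2}=\frac{1}{az(\theta)+b},
\]
which matches Vieta's product of roots. Next I would check the sum: $t_{1}(\theta)+t_{2}(\theta)=2\tau(\theta)\cos\theta$, and the desired equality $2\tau(\theta)\cos\theta=-z(\theta)/(az(\theta)+b)=-z(\theta)\tau(\theta)^{2}$ reduces, after multiplying by $\sqrt{az(\theta)+b}>0$, to the identity
\[
z(\theta)=-2\cos\theta\sqrt{az(\theta)+b}.
\]
Squaring this is exactly the quadratic relation \eqref{eq:zthetaeq} satisfied by $z(\theta)$, so it is enough to verify the correct sign.

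The main obstacle, and really the only non-automatic step, is the sign verification. Rewriting \eqref{eq:zthetadef} as
\[
z(\theta)=2\cos\theta\bigl(a\cos\theta-\sqrt{a^{2}\cos^{2}\theta+b}\bigr),
\]
I observe that since $b\ge 1+2|a|>0$, we have $\sqrt{a^{2}\cos^{2}\theta+b}>|a\cos\theta|\ge a\cos\theta$, so the parenthetical factor is strictly negative on $(0,\pi)$. Hence $z(\theta)$ and $-\cos\theta$ carry the same sign (and both vanish only at $\theta=\pi/2$). Combined with $\sqrt{az(\theta)+b}>0$ and the squared identity \eqref{eq:zthetaeq}, this forces $z(\theta)=-2\cos\theta\sqrt{az(\theta)+b}$, which is exactly what is needed. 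Thus $t_{1}(\theta)$ and $t_{2}(\theta)$ are the two zeros of \eqref{eq:quadfactor}, completing the proof.
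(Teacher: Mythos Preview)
Your proof is correct and follows essentially the same approach as the paper: verify Vieta's formulas for the product and sum, and then check the sign of $z(\theta)\cos\theta$ (the paper simply notes $z(\theta)\cos\theta<0$ from \eqref{eq:zthetadef} and $b>0$, while you spell out the same observation by factoring). One tiny slip: to pass from $2\tau(\theta)\cos\theta=-z(\theta)\tau(\theta)^{2}$ to $z(\theta)=-2\cos\theta\sqrt{az(\theta)+b}$ you should multiply by $az(\theta)+b=1/\tau(\theta)^{2}$, not by $\sqrt{az(\theta)+b}$, but this does not affect the argument.
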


\begin{proof}
We verify that $\tau(\theta)e^{\pm i\theta}$ satisfy the Vieta's
formulas. Indeed, we have 
\begin{equation}
t_{1}(\theta)t_{2}(\theta)=\tau(\theta)^{2}=\frac{1}{az(\theta)+b}\label{eq:prodelem}
\end{equation}
and 
\begin{align*}
t_{1}(\theta)+t_{2}(\theta) & =2\tau(\theta)\cos\theta=\frac{2\cos\theta}{\sqrt{az(\theta)+b}}.
\end{align*}
From \eqref{eq:zthetadef}, we note that 
\begin{equation}
z(\theta)\cos\theta<0\label{eq:signztheta}
\end{equation}
since $b>0$. As a consequence, we obtain 
\begin{equation}
\frac{2\cos\theta}{\sqrt{az(\theta)+b}}=\frac{-z(\theta)}{az(\theta)+b}\label{eq:sumelemztheta}
\end{equation}
by squaring both sides and applying \eqref{eq:zthetaeq}. 
\end{proof}
The lemma below shows that for each $\theta\in(0,\pi)$, the two zeros
of \eqref{eq:quadfactor} lie inside the unit ball. 
\begin{lem}
\label{lem:taule1}For any $\theta\in(0,\pi)$, we have $|\tau(\theta)|<1$. 
\end{lem}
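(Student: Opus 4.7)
The plan is to reduce the claim $|\tau(\theta)|<1$ to a simple algebraic inequality in $a$, $b$, and $\cos\theta$, and then to establish that inequality using the hypothesis $b\ge 1+2|a|$ together with the strict bound $|\cos\theta|<1$ that holds throughout the open interval $(0,\pi)$.

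First, by Lemma \ref{lem:tauwelldef} the quantity $\tau(\theta)=1/\sqrt{az(\theta)+b}$ is a positive real, so the inequality $|\tau(\theta)|<1$ is equivalent to
\[
az(\theta)+b>1.
\]
Substituting the explicit formula \eqref{eq:zthetadef} for $z(\theta)$, this becomes
\[
2a^{2}\cos^{2}\theta+b-1>2a\cos\theta\sqrt{a^{2}\cos^{2}\theta+b}.
\]
I would handle this inequality by splitting on the sign of $a\cos\theta$. When $a\cos\theta\le 0$ the right side is non-positive while the left side is bounded below by $b-1\ge 2|a|$, and the inequality is essentially immediate.

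When $a\cos\theta>0$, both sides are positive, so squaring is legitimate. A short expansion cancels the $4a^{4}\cos^{4}\theta$ and $4a^{2}b\cos^{2}\theta$ terms and reduces the inequality to
\[
(b-1)^{2}>4a^{2}\cos^{2}\theta.
\]
The hypothesis $b\ge 1+2|a|$ gives $(b-1)^{2}\ge 4a^{2}$, and on $(0,\pi)$ one has the strict bound $\cos^{2}\theta<1$; combining these yields the strict inequality.

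The step I expect to require the most care is the boundary case $b=1+2|a|$, because there $(b-1)^{2}=4a^{2}$ and strictness in the squared inequality is produced only by the fact that $\cos^{2}\theta<1$ on the open interval. In particular, this is the unique place in the argument where it is essential that $\theta$ lies in the open interval $(0,\pi)$ rather than its closure, and this is the sort of subtlety a proof write-up should highlight explicitly.
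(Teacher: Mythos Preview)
Your argument is correct and is essentially the paper's proof: both substitute \eqref{eq:zthetadef} to reduce $az(\theta)+b>1$ to an inequality in $a$, $b$, $\cos\theta$, split on the sign of $a\cos\theta$, square in the nontrivial case, and finish via $b\ge 1+2|a|$ and the strict bound $|\cos\theta|<1$ on $(0,\pi)$. The only cosmetic difference is that the paper first passes to the equivalent form $\sqrt{a^{2}\cos^{2}\theta+b}>1+a\cos\theta$ (using \eqref{eq:prodelem}, \eqref{eq:sumelemztheta}, \eqref{eq:zthetadef}), so that squaring yields $b>1+2a\cos\theta$ directly rather than your $(b-1)^{2}>4a^{2}\cos^{2}\theta$.
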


\begin{proof}
From \eqref{eq:prodelem}, \eqref{eq:sumelemztheta}, and \eqref{eq:zthetadef},
it suffices to show 
\[
\sqrt{a^{2}\cos^{2}\theta+b}>1+a\cos\theta.
\]
If the right side is negative, the inequality is trivial. If not,
we square both sides and the inequality follows from 
\[
b\ge1+2|a|>1+2a\cos\theta.
\]
\end{proof}
For each $\theta\in(0,\pi)$, the Cauchy differentiation formula gives
\begin{align*}
P_{n}(z(\theta)) & =\frac{1}{2\pi i}\ointctrclockwise_{|t|=\epsilon}\frac{1}{(t+1)((az(\theta)+b)t^{2}+z(\theta)t+1)t^{n+1}}dt\\
 & =\frac{1}{2\pi i}\ointctrclockwise_{|t|=\epsilon}\frac{1}{(az(\theta)+b)(t+1)(t-t_{1}(\theta))(t-t_{2}(\theta))t^{n+1}}dt.
\end{align*}
We recall that $az(\theta)+b\ne0$ by Lemma \ref{lem:tauwelldef}.
If we integrate the integrand over the circle $Re^{it}$, $0\le t\le2\pi$,
and let $R\rightarrow\infty$, then the integral approaches $0$.
Thus the sum of $P_{n}(z(\theta))$ and the residues of the integrand
at the three simple poles $-1$, $t_{1}(\theta)$ and $t_{2}(\theta)$
is $0$. We compute these residue and deduce that $-(az(\theta)+b)P_{n}(z(\theta))$
equals to 
\[
\frac{(-1)^{n+1}}{(1+t_{1}(\theta))(1+t_{2}(\theta))}+\frac{1}{t_{1}(\theta)^{n+1}(1+t_{1}(\theta))(t_{1}(\theta)-t_{2}(\theta))}+\frac{1}{t_{2}^{n+1}(\theta)(1+t_{2}(\theta))(t_{2}(\theta)-t_{1}(\theta))}.
\]
We multiply this expression by $(1+t_{1}(\theta))(1+t_{2}(\theta))\tau(\theta)^{n+1}$,
which is nonzero $\forall\theta\in(0,\pi)$, and conclude $\theta$
is a zero of $P_{n}(z(\theta))$ if and only if it is a zero of 
\[
(-1)^{n+1}\tau(\theta)^{n+1}+\frac{1+\tau(\theta)e^{i\theta}}{(\tau(\theta)e^{-i\theta}-\tau(\theta)e^{i\theta})e^{-i(n+1)\theta}}+\frac{1+\tau(\theta)e^{-i\theta}}{(\tau(\theta)e^{i\theta}-\tau(\theta)e^{-i\theta})e^{i(n+1)\theta}}
\]
or equivalently a zero of 
\[
(-1)^{n+1}\tau(\theta)^{n+1}-\frac{\sin((n+1)\theta)/\tau(\theta)+\sin((n+2)\theta)}{\sin\theta}.
\]
With the trigonometric identity $\sin(n+2)\theta=\sin((n+1)\theta)\cos\theta+\cos((n+1)\theta)\sin\theta$,
we write the expression above as 
\begin{equation}
(-1)^{n+1}\tau(\theta)^{n+1}-\cos((n+1)\theta)-\frac{\sin((n+1)\theta)(\cos\theta+1/\tau(\theta))}{\sin\theta}.\label{eq:functheta}
\end{equation}

We note that if 
\[
\theta=\frac{k\pi}{n+1},\qquad1\le k\le n,
\]
then the sign of \eqref{eq:functheta} is $(-1)^{k+1}$ since $\tau(\theta)<1$
by Lemma \ref{lem:taule1}. By the intermediate value theorem, \eqref{eq:functheta}
has at least $n-1$ solution on $(\pi/(n+1),n\pi/(n+1))$. We also
note that as $\theta\rightarrow0$, the sign of \eqref{eq:functheta}
is negative since $\sin((n+1)\theta)/\sin\theta$ approaches $n+1$
and $\tau(\theta)<1$. Thus \eqref{eq:functheta} has another zero
on $(0,\pi/(n+1))$. From Lemma \ref{lem:zmonotone}, each zero in
$\theta$ of \eqref{eq:functheta} gives exactly one zero in $z$
of $P_{n}(z)$ on 
\[
\left(2a-2\sqrt{a^{2}+b},2a+2\sqrt{a^{2}+b}\right).
\]
Thus all the zeros of $P_{n}(z)$ lie on the interval above by the
fundamental theorem of algebra and Lemma \ref{lem:degPn}. The density
of the zeros of $P_{n}(z)$ as $n\rightarrow\infty$ on this interval
follows directly from the density of the solutions of \eqref{eq:functheta}
and the continuity of $z(\theta)$.

\subsection{The necessary condition }

In this section, we will show that if either 
\begin{enumerate}
\item $b\leq-1$ or 
\item $-1<b<1+2|a|,$ 
\end{enumerate}
then not all polynomials $P_{n}(z)$ are hyperbolic. By \cite[Theorem 1.5]{sokal},
it suffices to find $z^{*}\in\mathbb{C\backslash\mathbb{R}}$ such
that the zeros of 
\begin{equation}
(t+1)((az^{*}+b)t^{2}+z^{*}t+1)\label{eq:denomz*}
\end{equation}
are distinct and the two smallest (in modulus) zeros of this polynomial
have the same modulus. Note that every small neighborhood of such
$z^{*}$ will contain a zero of $P_{n}(z)$ for all large $n$ and
consequently $P_{n}(z)$ is not hyperbolic for all large $n$. For
more details on this application of the theorem, see \cite{tz}.

For the first case $b\le-1$, we let $\theta^{*}$ be any angle with
$a^{2}\cos^{2}\theta^{*}<-b$ and let $\tau^{*}$ be any zero of 
\[
b\tau^{2}-2a\tau\cos\theta^{*}-1.
\]
Note that $\tau^{*}\notin\mathbb{R}$ since 
\[
a^{2}\cos^{2}\theta^{*}+b<0
\]
and consequently $\tau^{*2}\notin\mathbb{R}$ by the definition of
$\tau^{*}$. With the note that $2a\tau^{*}\cos\theta^{*}+1$ is nonreal
(and thus nonzero), we choose 
\[
z^{*}=\frac{-2b\tau^{*}\cos\theta^{*}}{2a\tau^{*}\cos\theta^{*}+1}
\]
which is nonreal since $1/z^{*}\notin\mathbb{R}$. From the definitions
of $\tau^{*}$, $\theta^{*}$, and $z^{*}$ above, the two solutions
of 
\[
(az^{*}+b)t^{2}+z^{*}t+1=0
\]
are $\tau^{*}e^{\pm i\theta^{*}}$ since they satisfy the Vieta's
formulas 
\[
\tau^{*2}=\frac{1}{az^{*}+b}
\]
and 
\[
2\tau^{*}\cos\theta^{*}=-\frac{z^{*}}{az^{*}+b}.
\]
Since $\tau^{*}$ and $\overline{\tau^{*}}$ are solutions of $b\tau^{2}-2a\tau\cos\theta^{*}-1$,
we have $\tau^{*}\overline{\tau^{*}}=|\tau^{*}|^{2}=-1/b\le1$. Thus
the two smallest (in modulus) zeros of \eqref{eq:denomz*} equal in
modulus and we complete the case $b\le-1$.

We now consider the case $-1<b<1+2|a|$. We will find $z^{*}\notin\mathbb{R}$
so that the smaller (in modulus) zero of $(az^{*}+b)t^{2}+z^{*}t+1$
lie on the unit circle. The inequality $\left|2|a|-b|\right|<1$ implies
that 
\[
1+2|a|>|b|>|b|.|2|a|-b|
\]
and consequently 
\[
1-b^{2}+2|a|+2b|a|>0.
\]
We conclude there is $\theta^{*}\in(0,\pi)$ sufficiently close to
$0$ when $a\ge0$ or close to $\pi$ when $a<0$ such that 
\[
b^{2}-2ab\cos\theta^{*}<1+2a\cos\theta^{*}.
\]
With this choice of $\theta^{*}$, we have 
\begin{equation}
\frac{|be^{i\theta^{*}}-a|}{|ae^{i\theta^{*}}+1|}=\frac{b^{2}+a^{2}-2ab\cos\theta}{a^{2}+1+2a\cos\theta}<1.\label{eq:ineqtheta*}
\end{equation}
We define 
\[
z^{*}=\frac{-1-be^{2i\theta^{*}}}{ae^{2i\theta^{*}}+e^{i\theta^{*}}}
\]
and write 
\begin{equation}
(az^{*}+b)t^{2}+z^{*}t+1\label{eq:quaddenomz*}
\end{equation}
as $z^{*}(at^{2}+t)+bt^{2}+1$ to conclude that $e^{i\theta^{*}}$
is a zero of this polynomial. Since the product of the two zeros of
this polynomial is $1/(az^{*}+b)$, we claim that the other zero of
this polynomial is more than $1$ in modulus by showing that 
\[
\frac{1}{|az^{*}+b|}>1.
\]
Indeed, from the definition of $z^{*}$, this inequality is equivalent
to \eqref{eq:ineqtheta*}. We note that $z^{*}\notin\mathbb{R}$ since
a solution of \eqref{eq:quaddenomz*} is $e^{i\theta^{*}}\notin\mathbb{R}$
and the other solution is more than $1$ in modulus.

\section{Linear combination of chebyshev polynomials}

The goal this section is to study necessary and sufficient conditions
under which the zeros of \eqref{eq:linearcombCheb} are real. The
sequence $\left\{ Q_{n}(z)\right\} $ in \eqref{eq:linearcombCheb}
is generated by 
\begin{align*}
\sum_{n=0}^{\infty}Q_{n}(z)t^{n} & =\sum_{n=0}^{\infty}\sum_{k=0}^{n}(az+b)^{k}U_{n-k}(z)t^{n}\\
 & =\sum_{k=0}^{\infty}(az+b)^{k}t^{k}\sum_{n=k}^{\infty}U_{n-k}(z)t^{n-k}\\
 & =\frac{1}{(1+(az+b)t)(1-2zt+t^{2})}.
\end{align*}
With the substitution $z$ by $-z/2$ and then $-a/2$ by $a$, it
suffice to study the hyperbolicity of the sequence generated of polynomials
by 
\[
\frac{1}{(1+(az+b)t)(1+zt+t^{2})}.
\]
As a small digression of the main goal, we will prove following theorem
which states that the positivity of the $t^{2}$- coefficient in the
factor $1+zt+t^{2}$ is important to ensure the hyperbolicity of the
generated sequence of polynomials. 
\begin{thm}
\label{thm:secondtheorem} Suppose $a,b,c\in\mathbb{R}$ where $c\ne0$.
If $c\le0$, then not all the polynomials $P_{n}(z)$ generated by
\[
\frac{1}{((az+b)t+1)(ct^{2}+zt+1)}.
\]
are hyperbolic. 
\end{thm}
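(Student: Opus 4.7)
The plan is to short-circuit any machinery (like Sokal's theorem) and instead directly exhibit $P_2(z)$ as a non-hyperbolic polynomial whenever $c<0$.

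First, I would expand the denominator of the generating function in powers of $t$:
\[
((az+b)t+1)(ct^{2}+zt+1) = 1 + ((a+1)z+b)\,t + (az^{2}+bz+c)\,t^{2} + c(az+b)\,t^{3},
\]
and read off the first few coefficients of the reciprocal series. Writing $D(z,t)=1+E_{1}t+E_{2}t^{2}+E_{3}t^{3}$, the usual recurrence gives $P_{0}=1$, $P_{1}=-E_{1}$, and
\[
P_{2}(z) = E_{1}^{2}-E_{2} = ((a+1)z+b)^{2} - (az^{2}+bz+c) = (a^{2}+a+1)z^{2} + (2a+1)b\,z + (b^{2}-c).
\]

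Next, I would compute the discriminant of the quadratic $P_{2}(z)$:
\[
\Delta = (2a+1)^{2}b^{2} - 4(a^{2}+a+1)(b^{2}-c) = -3b^{2} + 4(a^{2}+a+1)c.
\]
The real polynomial $a^{2}+a+1$ has discriminant $-3<0$, so $a^{2}+a+1>0$ for every $a\in\mathbb{R}$; in particular $P_{2}$ is a genuine degree-two polynomial with positive leading coefficient. When $c<0$, both terms of $\Delta$ are nonpositive and the second is strictly negative, so $\Delta<0$. Hence $P_{2}$ has no real zero, establishing that not every $P_{n}$ is hyperbolic.

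There is really no obstacle to overcome: the argument is a one-line discriminant computation, and the only thing to be careful about is confirming that $a^{2}+a+1$ never vanishes on $\mathbb{R}$ so that $P_{2}$ does not degenerate in degree. I expect the entire proof to take only a few lines.
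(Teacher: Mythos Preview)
Your argument is correct: the expansion, the formula $P_{2}=E_{1}^{2}-E_{2}$, and the discriminant computation all check out, and since $a^{2}+a+1>0$ the polynomial $P_{2}$ is genuinely quadratic with $\Delta=-3b^{2}+4(a^{2}+a+1)c<0$ whenever $c<0$. So $P_{2}$ already fails to be hyperbolic, which is all the theorem asks for.

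This is a genuinely different, and much shorter, route than the paper's. The paper normalises to $c=-1$ and then invokes Sokal's theorem: it manufactures a nonreal $z^{*}$ for which the two smallest-modulus roots of $((az^{*}+b)t+1)(-t^{2}+z^{*}t+1)$ coincide in modulus, splitting into the cases $|b|<1$ and $|b|\ge 1$ (the latter requiring an auxiliary lemma involving the maximum modulus principle and the Cauchy integral formula). That argument establishes the stronger conclusion that $P_{n}$ is non-hyperbolic for all sufficiently large $n$, and it locates where the offending nonreal zeros accumulate. Your approach trades that extra information for an elementary, case-free proof that settles the theorem exactly as stated in a couple of lines.
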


We note that if $c=0$, the sequence of generated polynomials satisfy
a three-term recurrence and their zeros have been studied in \cite{tran}.
Under the condition $c>0$, with the substitution $t\rightarrow t/\sqrt{c}$,
we can assume $c=1$. The following theorem settles the necessary
and sufficient conditions for the hyperbolicity of \eqref{eq:linearcombCheb}. 
\begin{thm}
\label{thm:thirdtheorem} Suppose $a,b\in\mathbb{R}$. The zeros of
all the polynomials $P_{n}(z)$ generated by 
\begin{equation}
\sum_{n=0}^{\infty}P_{n}(z)t^{n}=\frac{1}{((az+b)t+1)(t^{2}+zt+1)}.\label{eq:genfuncthirdthm}
\end{equation}
are real if and only if $\left|b\right|\leq1-2\left|a\right|$. Moreover
when $|b|\le1-2|a|$, the zeros of $P_{n}(z)$ lies on $(-2,2)$ and
are dense there as $n\rightarrow\infty$. 
\end{thm}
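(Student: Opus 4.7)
I will mirror Theorem~\ref{thm:firsttheorem}'s two-part strategy. For the necessary direction, assume $|b|>1-2|a|$; I will exhibit a nonreal $z^{*}$ for which the three roots of $((az^{*}+b)t+1)(t^{2}+z^{*}t+1)$ are distinct and the two smallest in modulus coincide in modulus, so that \cite[Theorem 1.5]{sokal} forbids hyperbolicity of $P_{n}$ for all large $n$. The subcases $b>1-2|a|$ and $b<-(1-2|a|)$ are handled by explicit ans\"atze parallel to Section~2.2: parametrize a root $e^{i\theta^{*}}$ of the quadratic factor for $\theta^{*}$ near $0$ or $\pi$, solve for $z^{*}$, and verify that the remaining root $-1/(az^{*}+b)$ from the linear factor has modulus strictly exceeding one.

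For the sufficient direction, assume $|b|\le 1-2|a|$. Expanding $((az+b)t+1)(t^{2}+zt+1)=1+((a+1)z+b)t+(az^{2}+bz+1)t^{2}+(az+b)t^{3}$ and matching coefficients in \eqref{eq:genfuncthirdthm} yields the recurrence $P_{n}=-((a+1)z+b)P_{n-1}-(az^{2}+bz+1)P_{n-2}-(az+b)P_{n-3}$, from which $\deg P_{n}\le n$ by induction. Setting $z(\theta)=2\cos\theta$ and $\tau(\theta)=az(\theta)+b=2a\cos\theta+b$, the hypothesis gives $|\tau(\theta)|\le 2|a|+|b|\le 1$, strict on $(0,\pi)$. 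The quadratic factor splits as $(t+e^{-i\theta})(t+e^{i\theta})$ with unit-modulus roots, and the linear factor contributes the root $-1/\tau(\theta)$ of modulus at least one. Applying the Cauchy differentiation formula to \eqref{eq:genfuncthirdthm} (the contour at infinity contributing zero) and summing the residues at the three simple poles, after simplifying via $\sin((n+2)\theta)=\sin((n+1)\theta)\cos\theta+\cos((n+1)\theta)\sin\theta$, gives
\[
P_{n}(z(\theta))\,\sin\theta\,\bigl(1-2\tau\cos\theta+\tau^{2}\bigr)=(-1)^{n}f(\theta),
\]
where $f(\theta):=\tau^{n+2}\sin\theta+\sin((n+1)\theta)-\tau\sin((n+2)\theta)$. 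Since $1-2\tau\cos\theta+\tau^{2}=|1-\tau e^{i\theta}|^{2}$ is strictly positive on $(0,\pi)$, the zeros of $P_{n}$ in $(-2,2)$ correspond bijectively to zeros of $f$ on $(0,\pi)$.

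To count zeros of $f$ I evaluate at $\theta_{k}=k\pi/(n+1)$ for $k=1,\dots,n$:
\[
f(\theta_{k})=\tau(\theta_{k})\sin\theta_{k}\bigl[\tau(\theta_{k})^{n+1}-(-1)^{k}\bigr],
\]
whose sign equals $\sign\tau(\theta_{k})\cdot(-1)^{k+1}$ since $|\tau(\theta_{k})|<1$. In the regime $|b|\ge 2|a|$, $\tau$ has constant sign on $(0,\pi)$ so these signs strictly alternate; the intermediate value theorem then produces $n-1$ interior zeros, and a sign analysis as $\theta\to 0^{+}$ or $\theta\to \pi^{-}$ supplies the $n$-th. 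Combined with $\deg P_{n}\le n$, this forces every zero of $P_{n}$ into $(-2,2)$, and density as $n\to\infty$ follows since $\theta_{k}$ is dense in $(0,\pi)$ and $z(\theta)$ is continuous.

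The main obstacle is the regime $|b|<2|a|$, where $\tau$ vanishes at a unique $\theta^{*}=\arccos(-b/2a)\in(0,\pi)$. When $\theta^{*}$ lies strictly inside some $(\theta_{j},\theta_{j+1})$, $f(\theta_{j})$ and $f(\theta_{j+1})$ share the sign $(-1)^{j+1}$, breaking naive alternation; but the identity $f(\theta^{*})=\sin((n+1)\theta^{*})$ has sign $(-1)^{j}$, opposite to those endpoint signs, restoring two sign changes in $(\theta_{j},\theta_{j+1})$ and keeping the total at $n$. The most delicate degenerate subcase is $\theta^{*}=\theta_{k}$ for some $k$, which can occur on special parameter slices including the boundary $|b|=1-2|a|$ (witness the clean factorization $f(\theta)=4\sin(\theta/2)\sin((n+1)\theta/2)\sin((n+2)\theta/2)$ arising at $a=0$, $b=1$): here $f(\theta_{k})=0$ directly accounts for one zero, but the two missing sign changes must be recovered by local analysis of $f$ near $\theta_{k}$ together with a direct identification of the remaining zeros via the factorization of $f(\theta)/\sin\theta$ viewed as a polynomial in $\cos\theta$.
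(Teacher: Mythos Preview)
Your sufficient-direction argument is close in spirit to the paper's, but with a different normalization: you keep the continuous function $f(\theta)$, whereas the paper works with an expression that has a vertical asymptote at the point where $az(\theta)+b$ vanishes. The paper then sidesteps the degenerate subcase $\theta^{*}=\theta_{k}$ (and the borderline $|b|=2|a|$, and $a=0$) altogether by invoking a density lemma: for each fixed $n$, it suffices to prove the result for a dense set of $a$'s in $[(|b|-1)/2,(1-|b|)/2]$, and the degenerate coincidences are avoided on such a set. Your attempt to handle the degeneracy head-on is not completed, and your parenthetical example $a=0$, $b=1$ is misplaced (there is no $\theta^{*}$ when $a=0$, since $\tau\equiv b$ never vanishes; that example illustrates a different boundary degeneracy, $|\tau|\equiv 1$). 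This part is recoverable by adopting the paper's density trick.

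The genuine gap is in your necessary direction. Your ansatz, borrowed from Section~2.2, is to parametrize a root of the quadratic factor $t^{2}+z^{*}t+1$ as $e^{i\theta^{*}}$ and then check that the linear-factor root has modulus exceeding $1$. But here the product of the two quadratic roots is $1$, so if one root is $e^{i\theta^{*}}$ the other is $e^{-i\theta^{*}}$; both lie on the unit circle and $z^{*}=-2\cos\theta^{*}$ is forced to be real, which defeats the purpose. (In Section~2.2 this worked because the quadratic there was $(az^{*}+b)t^{2}+z^{*}t+1$, with product of roots $1/(az^{*}+b)$, so one root could sit on the unit circle while the other did not.) The paper's argument is structurally different: it seeks $z^{*}\notin\mathbb{R}$ with $|t_{0}|=|t_{1}|<|t_{2}|$, where $t_{0}=-1/(az^{*}+b)$ and $t_{1},t_{2}$ are the quadratic roots, by writing $t_{1}=t_{0}e^{i\theta}$ and $t_{2}=1/t_{1}$ and solving the resulting quadratic in $z^{*}$. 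This leads to a case split ($|a|\le 1$; $|a|>1$ with $|b|<1$; $|a|>1$ with $|b|\ge 1$), the last of which requires the maximum modulus principle applied to an auxiliary analytic function. None of this is captured by your outline.
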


\subsection{Proof of Theorem \ref{thm:secondtheorem}}

In the case $c<0$, with the substitution $t\rightarrow t/\sqrt{|c|}$,
it suffices to show that for any $a,b\in\mathbb{R}$, not all the
polynomials generated by 
\[
\frac{1}{((az+b)t+1)(-t^{2}+zt+1)}
\]
are hyperbolic. Recall a consequence of \cite[Theorem 1.5]{sokal}
that we will need to find $z^{*}\notin\mathbb{R}$ so that the two
smallest zeros of 
\[
((az^{*}+b)t+1)(-t^{2}+z^{*}t+1)
\]
equal in modulus.

In the case $|b|<1$, we choose $z^{*}=iy^{*}$ where 
\[
0<y^{*}<\min\left(\frac{\sqrt{1-b^{2}}}{|a|},2\right)
\]
if $a\ne0$ and $0<y^{*}<2$ if $a=0$. The two zeros of $-t^{2}+z^{*}t+1$,
\[
\frac{iy^{*}\pm\sqrt{4-y^{*2}}}{2}
\]
lie on the unit circle and thus their modulus is less than 
\[
\frac{1}{|az^{*}+b|}=\frac{1}{\sqrt{a^{2}y^{*2}+b^{2}}}.
\]

For the remainder of Section 3.1, we assume $|b|\ge1$. To make a
suitable choice for $z^{*}$, we consider the following lemma. 
\begin{lem}
\label{lem:theta*existence}With the principal cut, there exists $\theta^{*}\ne k\pi$,
$k\in\mathbb{Z}$, such that 
\[
|b|+\sqrt{b^{2}+4a^{2}-4ae^{i\theta^{*}}}\geq\left|2a-2e^{i\theta^{*}}\right|.
\]
\end{lem}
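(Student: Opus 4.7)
My plan is to exhibit a valid $\theta^{*}$ by establishing the inequality at the excluded baseline $\theta^{*}=\pi$ and then transferring it to a nearby $\theta^{*}\ne k\pi$ by continuity. Note first that the substitution $(a,\theta^{*})\mapsto(-a,\theta^{*}+\pi)$ preserves both sides of the inequality, so I may assume $a\ge0$.

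At $\theta^{*}=\pi$ the radicand $D:=b^{2}+4a^{2}-4ae^{i\theta^{*}}$ becomes $b^{2}+4a^{2}+4a=(b^{2}-1)+(2a+1)^{2}$, which is non-negative since $|b|\ge1$. Taking the principal square root (now simply the usual non-negative real root) yields
\[
|b|+\sqrt{b^{2}+4a^{2}+4a}\;\ge\;1+(2a+1)\;=\;2(a+1)\;=\;|2a-2e^{i\pi}|,
\]
with equality precisely when $|b|=1$. Moreover the principal square root $\sqrt{D(\theta^{*})}$ is continuous on all of $\mathbb{R}$: for $\theta^{*}=k\pi$ the above computation (and its $\theta^{*}=0$ analogue) shows $D\ge0$, and for $\theta^{*}\notin\pi\mathbb{Z}$ one has $\operatorname{Im}(D)=-4a\sin\theta^{*}\ne0$ unless $a=0$ (in which case $D=b^{2}\ge1$), so $D$ never meets the branch cut. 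Hence both sides of the lemma are continuous functions of $\theta^{*}$, and when $|b|>1$ the strict inequality at $\pi$ persists on some interval $(\pi-\delta,\pi)$; any $\theta^{*}$ there completes the proof.

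The main obstacle is the boundary subcase $|b|=1$, where only equality holds at $\theta^{*}=\pi$. Here I will run a second-order Taylor expansion at $\theta^{*}=\pi-\epsilon$. Using the principal branch, $\sqrt{D(\pi-\epsilon)}\approx(2a+1)-\tfrac{2ai\epsilon}{2a+1}$, and expanding the moduli of both sides to second order gives
\[
\mathrm{LHS}-\mathrm{RHS}\;=\;\frac{a(4a+1)}{(2a+1)^{2}}\,\epsilon^{2}+O(\epsilon^{3}),
\]
whose leading coefficient is non-negative (strictly positive when $a>0$). Therefore $\theta^{*}=\pi-\epsilon$ still satisfies the inequality for all sufficiently small $\epsilon>0$. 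Finally, when $a=0$ both sides are identically $2$, so any $\theta^{*}\ne k\pi$ trivially works. This closes all cases.
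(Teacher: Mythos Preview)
Your argument is correct in outline but differs substantially from the paper's. The paper packages the inequality as $|f(e^{i\theta^{*}})|\ge 1$ for the meromorphic function
\[
f(z)=\frac{|b|+\sqrt{b^{2}+4a^{2}-4az}}{2a-2z}
\]
on the unit disk, and then uses complex analysis: when $|a|\ge 1$ it invokes the maximum modulus principle after checking $|f(0)|>1$, and when $|a|<1$ it uses the Cauchy integral formula (the residue at the simple pole $z=a$ equals $-|b|$) to bound $\oint_{|z|=1}|f(z)|\,|dz|\ge 2\pi|b|\ge 2\pi$, forcing $|f|\ge 1$ somewhere on the circle. Your approach is instead a purely real-variable continuity/Taylor argument around the excluded point $\theta^{*}=\pi$ (after reducing to $a\ge 0$), which is more elementary and avoids any complex-analytic machinery. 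The paper's method has the advantage of treating all $a,b$ uniformly without a boundary case analysis; your method has the advantage of being constructive and self-contained.

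One small correction: your quadratic Taylor coefficient is off. Carrying the expansion of $\sqrt{D(\pi-\epsilon)}$ to second order and squaring gives
\[
\bigl||b|+\sqrt{D}\bigr|^{2}-|2a-2e^{i\theta^{*}}|^{2}
=\frac{16a^{2}(a+1)^{2}}{(2a+1)^{3}}\,\epsilon^{2}+O(\epsilon^{3}),
\]
so that $\mathrm{LHS}-\mathrm{RHS}=\dfrac{4a^{2}(a+1)}{(2a+1)^{3}}\,\epsilon^{2}+O(\epsilon^{3})$, not $\dfrac{a(4a+1)}{(2a+1)^{2}}\,\epsilon^{2}$. This does not affect your conclusion, since the leading coefficient is still nonnegative (and strictly positive for $a>0$), but you should correct the stated value.
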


\begin{proof}
We note that $b^{2}+4a^{2}\ge4|a|$ since 
\[
4|a|(1-|a|)\le1\le|b|.
\]
Thus with the principle cut, the function 
\[
f(z):=\frac{|b|+\sqrt{b^{2}+4a^{2}-4az}}{2a-2z}
\]
is meromorphic on the open unit ball with the possible pole at $z=a$
if $|a|<1.$ To prove this lemma, we will find $z\notin\mathbb{R}$
and $|z|=1$ such that $|f(z)|\ge1$.

We note that if $|a|\ge1$, then $f(z)$ is analytic on the unit ball
and 
\[
|f(0)|=\frac{|b|+\sqrt{b^{2}+4a^{2}}}{2|a|}>1.
\]
Thus by the maximum modulus principle $|f(z)|>1$ for some $|z|=1$.
We can choose such $z\notin\mathbb{R}$ by the continuity of $f(z)$.

On the other hand if $|a|<1$, then the Cauchy integral formula implies
that 
\[
\ointctrclockwise_{|z|=1}|f(z)||dz|\ge\left|\ointctrclockwise_{|z|=1}f(z)dz\right|=2\pi|b|\ge2\pi.
\]
Consequently $|f(z)|>1$ for some $|z|=1$ or $|f(z)|=1$ for all
$|z|=1$ and the lemma follows. 
\end{proof}
We now define 
\[
z^{*}=\frac{-2ab+be^{i\theta^{*}}+\sign(b)e^{i\theta^{*}}\sqrt{b^{2}+4a^{2}-4ae^{i\theta^{*}}}}{2a^{2}-2ae^{i\theta^{*}}}
\]
where $\theta^{*}$ is given in Lemma \ref{lem:theta*existence}.
With this definition, $z^{*}$ is a solution of 
\[
(a^{2}-ae^{i\theta^{*}})z^{2}+(2ab-be^{i\theta^{*}})z+b^{2}-e^{2i\theta^{*}}=0
\]
from which we deduce that 
\begin{equation}
-\frac{e^{i\theta^{*}}}{az^{*}+b}=-\frac{2a-2e^{i\theta^{*}}}{-b+\sign(b)\sqrt{b^{2}+4a^{2}-4ae^{i\theta^{*}}}}\label{eq:firstzero}
\end{equation}
is a zero in $t$ of 
\[
-t^{2}+z^{*}t+1.
\]
The modulus of \eqref{eq:firstzero} is the same as the modulus of
the zero in $t$ of $(az^{*}+b)t+1$ which is at most $1$ by the
definition of $\theta^{*}$. This modulus is larger than the modulus
of the other zero of $-t^{2}+z^{*}t+1$ since the product of two zeros
of this polynomial is $-1$. We finish the proof of Theorem \ref{thm:secondtheorem}
by noting that $z^{*}\notin\mathbb{R}$ since the two zeros of $-t^{2}+z^{*}t+1$
are neither real nor complex conjugate.

\subsection{Proof of Theorem \ref{thm:thirdtheorem}}

\subsubsection{The sufficient condition}

Let $\left\{ P_{n}(z)\right\} $ be the sequence of polynomials defined
in \eqref{eq:genfuncthirdthm} where $\left|b\right|\leq1-2\left|a\right|$.
The proof of the following lemma is the same as that of Lemma 4 in
\cite{tz}. For brevity, we omit the proof in this paper. 
\begin{lem}
\label{lem:adensity}For each $b\in[-1,1],$ let $S_{b}$ be a dense
subset of 
\begin{equation}
\left[\frac{|b|-1}{2},\frac{1-|b|}{2}\right]\label{eq:ainterval}
\end{equation}
and $n\in\mathbb{N}$ be fixed. If for any $a\in S_{b}$, the zeros
of $P_{n}(z)$ lie on $(-2,2)$, then the same conclusion holds for
any $a$ in \eqref{eq:ainterval} . 
\end{lem}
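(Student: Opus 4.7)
The strategy is a continuity-plus-Hurwitz argument combined with a direct evaluation of $P_n(\pm 2;a,b)$. Fix $b\in[-1,1]$ and $n\in\mathbb{N}$, and let $a_0$ lie in the closed interval \eqref{eq:ainterval}. Expanding the generating function \eqref{eq:genfuncthirdthm} shows that each coefficient of $P_n(z;a,b)$ (viewed as a polynomial in $z$) is itself a polynomial in $a$; consequently, if $a_k\in S_b$ with $a_k\to a_0$, the polynomials $P_n(\,\cdot\,;a_k,b)$ converge to $P_n(\,\cdot\,;a_0,b)$ uniformly on compact subsets of $\mathbb{C}$.

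The heart of the proof --- and the step I expect to be the main obstacle --- is to show that $P_n(\pm 2;a,b)\neq 0$ for every $a$ in \eqref{eq:ainterval}. Setting $z=2$ in \eqref{eq:genfuncthirdthm} collapses $t^2+zt+1$ to $(t+1)^2$, and a partial-fraction decomposition of $\frac{1}{((2a+b)t+1)(t+1)^2}$ with $c:=2a+b$ yields, after simplification, the closed form
\[
P_n(2;a,b) \;=\; \frac{(-1)^n}{(c-1)^2}\bigl(c^{n+2}-(n+2)c+(n+1)\bigr) \;=\; (-1)^n\sum_{j=0}^n (n+1-j)\,c^j.
\]
The hypothesis $|b|\leq 1-2|a|$ forces $|c|=|2a+b|\leq 1$. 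Setting $g(c):=c^{n+2}-(n+2)c+(n+1)$, one computes $g'(c)=(n+2)(c^{n+1}-1)\leq 0$ on $[-1,1]$ and $g(1)=0$, so $g>0$ on $[-1,1)$; dividing by $(c-1)^2$ then shows the rightmost sum above is strictly positive on all of $[-1,1]$, giving $P_n(2;a,b)\neq 0$. An analogous computation at $z=-2$ gives $P_n(-2;a,b)=\sum_{j=0}^n(n+1-j)(2a-b)^j\neq 0$ under the same hypothesis.

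With the endpoints excluded, Hurwitz's theorem finishes the argument. Each $P_n(\,\cdot\,;a_k,b)$ is nonzero on $\mathbb{C}\setminus(-2,2)$, and the limit $P_n(\,\cdot\,;a_0,b)$ is nonzero at $\pm 2$ (and so is not identically zero). Therefore every zero of $P_n(\,\cdot\,;a_0,b)$ must be a limit of zeros of $P_n(\,\cdot\,;a_k,b)$, which are real numbers in $(-2,2)$, so such a zero is real and lies in $[-2,2]$. Combined with $P_n(\pm 2;a_0,b)\neq 0$, every zero of $P_n(\,\cdot\,;a_0,b)$ lies in $(-2,2)$, as desired.
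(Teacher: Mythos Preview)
Your argument is correct and is essentially the intended one: the paper omits the proof entirely, citing Lemma~4 of \cite{tz}, which is exactly this continuity/Hurwitz strategy combined with the verification that $P_n(\pm 2;a,b)\neq 0$ on the closed parameter interval. Your explicit closed forms $P_n(2;a,b)=(-1)^n\sum_{j=0}^n(n+1-j)(2a+b)^j$ and $P_n(-2;a,b)=\sum_{j=0}^n(n+1-j)(2a-b)^j$, together with the positivity check on $|2a|+|b|\le 1$, supply precisely the missing details.
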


Suppose $\left|b\right|\leq1-2\left|a\right|$. From Lemma \ref{lem:adensity},
it suffices to consider $a\ne0$. We define the monotone function
$z(\theta)=-2\cos\theta$ on $(0,\pi)$ and note that for each $\theta\in(0,\pi)$
the two zeros of $t^{2}+z(\theta)t+1$ are $e^{\pm i\theta}$. We
consider the function 
\[
t_{0}(\theta)=\frac{-1}{az(\theta)+b},\qquad\theta\in(0,\pi),
\]
which has a vertical asymptote at $\theta=\cos^{-1}(b/2a)$ if $|b|<2|a|$.
For any $\theta\in(0,\pi)$ such that $2a\cos\theta\ne b$, the Cauchy
differentiation formula gives 
\[
P_{n}(z(\theta))=\frac{1}{az(\theta)+b}\ointctrclockwise_{|t|=\epsilon}\frac{dt}{(t-t_{0}(\theta))(t-e^{i\theta})(t-e^{-i\theta})t^{n+1}}.
\]
After computing the residue of the integrand at the three nonzero
simple poles $t_{0}(\theta),e^{\pm i\theta}$, and letting the radius
of the integral approach infinity, we apply similar computations in
\eqref{eq:functheta} to conclude that $\theta\in(0,\pi)$, $2a\cos\theta\ne b$,
is a zero of $P_{n}(z(\theta))$ if and only if it is a zero of 
\begin{equation}
\frac{-1}{t_{0}(\theta)^{n+1}}+\cos\left((n+1)\theta\right)+\frac{(\cos\theta-t_{0}(\theta))\sin\left((n+1)\theta\right)}{\sin\theta}.\label{eq:secondfunctheta}
\end{equation}
From Lemma \ref{lem:adensity}, it suffices to consider $|b|\ne2|a|$.
We note that the limits of \eqref{eq:secondfunctheta} as $\theta\rightarrow0$
and $\theta\rightarrow\pi$ are 
\begin{equation}
n+2+\frac{n+1}{b-2a}+(-1)^{n}(b-2a)^{n+1}\label{eq:leftlimit}
\end{equation}
and 
\begin{equation}
(-1)^{n+1}(n+2)+(-1)^{n}\left(\frac{n+1}{b+2a}+(b+2a)^{n+1}\right)\label{eq:rightlimit}
\end{equation}
respectively.

In the case $|b|>2|a|$, \eqref{eq:secondfunctheta} is a continuous
function of $\theta$ on $(0,\pi)$ and its sign at $\theta=k\pi/(n+1)$,
for $1\le k\le n$, is $(-1)^{k}$ since 
\[
|t_{0}(\theta)|>\frac{1}{2|a|+|b|}\ge1.
\]
By the intermediate value theorem, we obtain at least $n-1$ zeros
of \eqref{eq:secondfunctheta} on $(\pi/(n+1),n\pi/(n+1))$. If $b>0$,
then \eqref{eq:leftlimit} is positive since $0<b-2a\le1$ and we
obtain at least another zero of \eqref{eq:secondfunctheta} on $(0,\pi/(n+1))$.
On the other hand, if $b<0$, then the inequalities 
\[
-1<b+2a<0
\]
imply that the sign of \eqref{eq:rightlimit} is $(-1)^{n+1}$ and
we have at least another zero of \eqref{eq:secondfunctheta} on $(n\pi/(n+1),\pi)$.
We conclude that when $|b|>2|a|$, \eqref{eq:secondfunctheta} has
at least $n$ zeros on $(0,\pi)$, each of which yields a zero of
$P_{n}(z)$ on the interval $(-2,2)$ by the map $z(\theta)$. Thus
all the zeros of $P_{n}(z)$ lie on $(-2,2)$ by the fundamental theorem
of algebra.

We now consider the case $|b|<2|a|$. As a function of $\theta$ on
$(0,\pi)$, \eqref{eq:secondfunctheta} has a vertical asymptote at
$\theta=\cos^{-1}(b/2a)$ since $t_{0}(\theta)$ does. By Lemma \ref{lem:adensity},
we can assume 
\[
\cos^{-1}\frac{b}{2a}\ne\frac{k\pi}{n+1},\qquad1\le k\le n.
\]
Thus for some $0\le k_{0}\le n$, the open interval 
\begin{equation}
\left(\frac{k_{0}}{n+1}\pi,\frac{k_{0}+1}{n+1}\pi\right)\label{eq:asympint}
\end{equation}
contains $\cos^{-1}\left(b/2a\right)$. We note that this interval
may or may not contain a zero of \eqref{eq:secondfunctheta}. In the
case $a<0$, we observe that \eqref{eq:leftlimit} is positive and
the sign of \eqref{eq:rightlimit} is $(-1)^{n+1}$. Thus there are
at least $n$ zeros of \eqref{eq:secondfunctheta} on the $n$ intervals
$(k\pi/(n+1),(k+1)\pi/(n+1))$, for $0\le k\le n$ and $k\ne k_{0}$
and we conclude all the zeros of $P_{n}(z)$ lie on $(-2,2)$ by the
same argument in the previous case. On the other hand, if $a>0$,
then the limits \eqref{eq:secondfunctheta} as $\theta$ approaches
the left and right of $\cos^{-1}(b/2a)$ are 
\[
\lim_{\theta\to\cos^{-1}(b/2a)^{-}}\frac{\sin((n+1)\theta)}{b-2a\cos(\theta)}=(-1)^{k_{0}+1}\infty
\]
and 
\[
\lim_{\theta\to\cos^{-1}(b/2a)^{+}}\frac{\sin((n+1)\theta)}{b-2a\cos(\theta)}=(-1)^{k_{0}}\infty,
\]
respectively. If $k_{0}\ne0$ and $k_{0}\ne n$, then we conclude
that \eqref{eq:asympint} contains at least two zeros of \eqref{eq:secondfunctheta}.
Thus we obtain at least $n$ zeros of this expression on the $n-1$
intervals $(k\pi/(n+1),(k+1)\pi/(n+1))$, for $1\le k<n$. In the
case $k_{0}=0$ or $k_{0}=n$, \eqref{eq:asympint} contains at least
one zero of \eqref{eq:secondfunctheta} and thus there are at least
$n$ zeros of \eqref{eq:secondfunctheta} on the $n$ intervals $(k\pi/(n+1),(k+1)\pi/(n+1))$,
for $1\le k<n$ and $k=k_{0}$.

\subsubsection{The necessary condition}

In this section, we assume $|b|+2|a|>1$ and show that not all zeros
of $P_{n}(z)$ defined in \eqref{eq:genfuncthirdthm} are real when
$n$ is large. From \cite[Theorem 1.5]{sokal} , it suffices find
$z\notin\mathbb{R}$ so that $|t_{0}|=|t_{1}|\le|t_{2}|$ where 
\begin{equation}
t_{0}:=-\frac{1}{az+b}\label{eq:t0choice}
\end{equation}
and $t_{1}$ and $t_{2}$ are the two zeros of $1+zt+t^{2}$. To motivate
the choice of $z$, we provide heuristic arguments by noticing that
$t_{1}t_{2}=1$ and letting 
\begin{equation}
t_{1}=t_{0}e^{i\theta}=-\frac{e^{i\theta}}{az+b}\label{eq:t1choice}
\end{equation}
\begin{equation}
t_{2}=-e^{-i\theta}(az+b).\label{eq:t2choice}
\end{equation}
The equation $1+zt_{2}+t_{2}^{2}=0$ yields 
\[
(az+b)^{2}-ze^{i\theta}(az+b)+e^{2i\theta}=0
\]
or equivalently 
\begin{multline}
(a^{2}-ae^{i\theta})z^{2}+(2ab-be^{i\theta})z+b^{2}+e^{2i\theta}=0.\label{eq:zquadratic}
\end{multline}
With a choice of branch cut which will be specified later, the equation
above has two solutions 
\[
z=\frac{-2ab+be^{i\theta}\pm e^{i\theta}\sqrt{b^{2}-4a^{2}+4ae^{i\theta}}}{2a^{2}-2ae^{i\theta}}
\]
and the corresponding values for $az+b$ are

\begin{equation}
az+b=\frac{-be^{i\theta}\pm e^{i\theta}\sqrt{b^{2}-4a^{2}+4ae^{i\theta}}}{2a-2e^{i\theta}}.\label{eq:az+b}
\end{equation}
For a formal proof of the necessary condition, we consider the following
cases.

\textbf{Case 1: $|a|\le1$.} We have the inequality 
\[
b^{2}-4a^{2}+4|a|-(|b|+2|a|-2)^{2}=4(1-|a|)(2|a|+|b|-1)\ge0.
\]
with equality if and only if $|a|=1$. This implies 
\begin{equation}
b^{2}-4a^{2}+4|a|\ge0\label{eq:firstineqabale1}
\end{equation}
and 
\begin{align}
\sqrt{b^{2}-4a^{2}+4|a|}+|b| & \ge\left||b|+2|a|-2\right|+|b|\nonumber \\
 & \ge|2|a|-2|\label{eq:secineqabale1}
\end{align}
with equality if and only if $|a|=1$ and $b=0$. We define $\theta\in(0,\pi)$
sufficiently close to $0$ or $\pi$ such that $e^{i\theta}$ is close
to $\sign a$ if $a\ne0$. If $a=0$, we pick any $\theta\in(0,\pi)$.
With this choice of $\theta$ and the principal cut, we let 
\begin{equation}
z=\begin{cases}
\frac{-2ab+be^{i\theta}-\sign b.e^{i\theta}\sqrt{b^{2}-4a^{2}+4ae^{i\theta}}}{2a^{2}-2ae^{i\theta}} & \text{ if }ab\ne0,\\
\frac{ie^{i\theta}}{\sqrt{a^{2}-ae^{i\theta}}} & \text{ if }b=0,\\
\frac{b^{2}+e^{2i\theta}}{be^{i\theta}} & \text{if }a=0.
\end{cases}\label{eq:zchoiceale1}
\end{equation}
With this choice of $z$, \eqref{eq:zquadratic} holds and consequently
$t_{1}$ and $t_{2}$ defined in \eqref{eq:t1choice} and \eqref{eq:t2choice}
are the zeros of $1+zt+t^{2}$. If $a=0$, then 
\begin{equation}
|t_{0}|=|t_{1}|<|t_{2}|\label{eq:t0t1t2}
\end{equation}
since $|b|>1$. If $b=0$ then the inequalities $|a|\le1$ and \eqref{eq:firstineqabale1}
imply that $|a|=1$. As a consequence, \eqref{eq:t0t1t2} follows
from \eqref{eq:t0choice}, \eqref{eq:t1choice}, \eqref{eq:t2choice},
and \eqref{eq:zchoiceale1}. Finally, if $ab\ne0$, then from \eqref{eq:az+b}
and \eqref{eq:secineqabale1}, we conclude $|az+b|$ approaches 
\[
\frac{|b|+\sqrt{b^{2}-4a^{2}+4|a|}}{2-2|a|}>1
\]
as $e^{i\theta}\rightarrow\sign(a)$. Thus from \eqref{eq:t1choice}
and \eqref{eq:t2choice} there is $\theta\in(0,\pi)$ sufficiently
close to $0$ or $\pi$ such that 
\[
|t_{0}|=|t_{1}|<|t_{2}|.
\]
We also note that $z\notin\mathbb{R}$ since if $z\in\mathbb{R}$,
then the fact that $t_{1},t_{2}\notin\mathbb{R}$ by \eqref{eq:t1choice}
and \eqref{eq:t2choice} implies $t_{1}=\overline{t_{2}}$ which contradicts
to $|t_{1}|<|t_{2}|$.

\textbf{Case 2: $|a|>1$ and $|b|<1$.} By the intermediate value
theorem there is $y\in(0,\infty)$ such that 
\[
2\sqrt{a^{2}y^{2}+b^{2}}-\sqrt{y^{2}+4}-y=0
\]
since the the left side is $2|b|-2<0$ when $y=0$ and its limit is
$\infty$ when $y\rightarrow\infty$. With the choice $z=iy$, we
have 
\[
|t_{0}|=\frac{1}{|az+b|}=\frac{1}{\sqrt{a^{2}y^{2}+b^{2}}}
\]
and the modulus of the smaller zero of $t^{2}+iyt+1$ is 
\[
\frac{\sqrt{y^{2}+4}-y}{2}=\frac{2}{\sqrt{y^{2}+4}+y}=|t_{0}|.
\]

\textbf{Case 3: $|b|\geq1$ and $|a|>1$.} If $2+|b|>2|a|$, then
with the same choice of $\theta$ and $z$ and the same argument as
in the first case, this case follows from 
\[
|\sqrt{b^{2}-4a^{2}+4ae^{i\theta}}+|b||>|b|>2|a|-2.
\]
We now consider\textbf{ $2+|b|\leq2|a|$.} We square both sides of
$2|a|-2\ge|b|$ to obtain 
\[
b^{2}-4a^{2}\le4-8|a|<-4|a|
\]
which implies that, with the cut $[0,\infty)$, the function 
\[
f(z):=\frac{-b+\sqrt{b^{2}-4a^{2}+4az}}{2a-2z}
\]
is analytic on a small region containing the closed unit ball. From
the maximum modulus principle and the fact that 
\begin{align*}
|f(0)| & =\frac{|-b+\sqrt{b^{2}-4a^{2}}|}{|2a|}\\
 & =1,
\end{align*}
we conclude there is $\theta\in\mathbb{R}$ so that $|f(e^{i\theta})|>1$.
With this $\theta$, we let 
\[
z=\frac{-2ab+be^{i\theta}+e^{i\theta}\sqrt{b^{2}-4a^{2}+4ae^{i\theta}}}{2a^{2}-2ae^{i\theta}}
\]
and apply \eqref{eq:t0choice}, \eqref{eq:t1choice}, \eqref{eq:t2choice},
and \eqref{eq:az+b} to conclude $|t_{0}|=|t_{1}|<|t_{2}|$. The fact
that $z\notin\mathbb{R}$ follows from the same argument in the previous
case.

\end{document}